\newcommand{\included}{\subseteq}
\newcommand{\nincluded}{\nsubseteq}
\newcommand{\includes}{\supseteq}
\newcommand{\sig}{\mathscr S}
\newtheorem{theorem}{Theorem}
\newcommand{\str}[1]{\mathfrak{#1}}  % structure with universe #1
\newcommand{\diag}[1]{\operatorname{diag}(#1)}
\renewcommand{\restriction}{\mathbin{\upharpoonright}}
\newcommand{\Forall}[1]{\forall{#1}\;}
\newcommand{\Exists}[1]{\exists{#1}\;}
\newcommand{\lto}{\to}
\newcommand{\stnd}[1]{\mathbb{#1}}
\newcommand{\Z}{\stnd Z}
\newcommand{\Q}{\stnd Q}
\newcommand{\F}{\stnd F}
\newcommand{\thy}[1]{\mathrm{#1}}      % for the following:
\newcommand{\DF}{\thy{DF}}             % differential fields
\newcommand{\ACFA}{\thy{ACFA}}         % fields with generic autom.
\newcommand{\mDF}{\text{$m$-$\thy{DF}$}} % fields with m derivations
\newcommand{\mpDF}{\text{$(m+1)$-$\thy{DF}$}}  % fields with m+1 derivations
\newcommand{\oDF}{\text{$\upomega$-$\thy{DF}$}} % fields with omega derivations
\newcommand{\DCF}{\thy{DCF}}           % differentially closed fields
\newcommand{\mDCF}{\text{$m$-$\thy{DCF}$}}  % differentially closed fields
\newcommand{\oDCF}{\text{$\upomega$-$\thy{DCF}$}}  % differentially closed fields
\newcommand{\mpDCF}{\text{$(m+1)$-$\thy{DCF}$}}  % differentially closed fields
\newcommand{\VSp}[1]{\operatorname{VS}_{#1}} % vector-spaces with an
\newcommand{\VSpm}[1]{\operatorname{VS}_{#1}^{\mathrm m}} % vector-spaces 
\newcommand{\VSpr}[1]{\operatorname{VS}_{#1}^{\mathrm r}} % reducts of
\newcommand{\VSpst}[1]{{\VSp{#1}}\!^*}      % model-companion for these
\newcommand{\Mod}[2][\included]{\operatorname{Mod}^{#1}(#2)}
\renewcommand{\phi}{\varphi}
\renewcommand{\setminus}{\smallsetminus}
\renewcommand{\leq}{\leqslant}
\renewcommand{\vec}[1]{\bm{#1}}
\newcommand{\alg}{^{\mathrm{alg}}}
\newcommand{\lord}{\vartriangleleft}
\newcommand{\lordeq}{\trianglelefteqslant}
\begin{document}
  \title{Chains of Theories and Companionability}
\author[\"O. Kasal]{\"Ozcan Kasal}
\address{Middle East Technical University, Northern Cyprus Campus}
\email{kasal@metu.edu.tr}
\author[D. Pierce]{David Pierce}
\address{Mimar Sinan Fine Arts University, Istanbul}
\email{dpierce@msgsu.edu.tr}
\date{\today}
\subjclass[2010]{03C10, 03C60, 12H05, 13N15}
\begin{abstract}
The theory of fields that are equipped with a countably infinite
family of commuting derivations is not companionable; but if the axiom
is added whereby the characteristic of the fields is zero, then the
resulting theory is companionable.  Each of these two theories is the
union of a chain of companionable theories.  In the case of
characteristic zero, the model-companions of the theories in the chain
form another chain, whose union is therefore the model-companion of
the union of the original chain.  However, in a signature with
predicates, in all finite numbers of arguments, for linear dependence
of vectors, the two-sorted theory of vector-spaces with their
scalar-fields is companionable, and it is the union of a chain of
companionable theories, but the model-companions of the theories in
the chain are mutually inconsistent.  Finally, the union of a chain of
non-companionable theories may be companionable. 
\end{abstract}

\maketitle

A \textbf{theory} in a given signature is a set of sentences, in the
first-order logic of that signature, that is closed under logical
implication.  We shall consider chains $(T_m\colon m\in\upomega)$ of
theories: this means 
  \begin{equation}\label{eqn:no*}
    T_0\included T_1\included T_2\included\dotsb
  \end{equation}
The signature of $T_m$ will be $\sig_m$, 
so automatically $\sig_0\included\sig_1\included\sig_2\included\dotsb$%
%\phantom{. }Let us denote the theory $\bigcup_{k\in\upomega}T_k$ by $U$.

In one
motivating example, $\sig_m$ is
$\{0,1,-,+,{}\cdot{},\partial_0,\dots,\partial_{m-1}\}$, the signature of
fields with $m$ additional singulary operation-symbols; and $T_m$ is
$\mDF$, the theory of fields (of any characteristic) with $m$ 
commuting derivations.  In this example, each $T_{m+1}$ is a
\textbf{conservative extension} of $T_m$, that is, $T_{m+1}\includes
T_m$ and every sentence in
$T_{m+1}$ of signature $\sig_m$ is already in $T_m$.  We establish
this by showing that every model of $T_m$ expands to a model of
$T_{m+1}$.  (This condition is sufficient, but not necessary
\cite[\S2.6, exer.~8, p.~66]{MR94e:03002}.)  If
$(K,\partial_0,\dots,\partial_{m-1})\models\mDF$, then
$(K,\partial_0,\dots,\partial_m)\models\mpDF$, where $\partial_m$ is
the $0$-derivation. 

The union of the theories $\mDF$ can be denoted by $\oDF$: it is the
theory of fields with $\upomega$-many commuting derivations.  Each of the
theories $\mDF$ has a \emph{model-companion,} called $\mDCF$
\cite{2007arXiv0708.2769P}; but we shall show (as
Theorem~\ref{thm:oDF} below) that $\oDF$ has no model-companion.  Let
us recall that a \textbf{model-companion} of a theory $T$ is a 
theory $T^*$ in the same signature such that
\begin{inparaenum}
\item
$T_{\forall}=T^*{}_{\forall}$, that is, every model of one of the
theories embeds in a model of the other, and
\item
$T^*$ is \textbf{model-complete,}
that is, $T^*\cup\diag{\str M}$ axiomatizes a complete theory for all
models $\str M$ of $T^*$.  
\end{inparaenum}
Here $\diag{\str M}$ is the quantifier-free theory of $\str M$
with parameters: equivalently, $\diag{\str M}$ is the theory of all
structures in which $\str M_M$ embeds.  (These notions, with
historical references, are reviewed further in
\cite{2007arXiv0708.2769P}.)  A theory has at most one
model-companion, by an argument with interwoven elementary chains. 

Let $\mDF_0$ be $\mDF$ with the additional requirement that the field
have characteristic $0$.  Then $\mDF_0$ has a model-companion, called
$\mDCF_0$ \cite{MR2001h:03066}.  We shall show (as
Theorem~\ref{thm:oDF0} below) that $\mDCF_0\included\mpDCF_0$.  It
will follow then that the union $\oDF_0$ of the $\mDF_0$ has a
model-companion, which is the union of the $\mDCF_0$.  This is by the
following general result, which has been observed also by Alice
Medvedev~\cite{Medvedev,Medvedev-preprint}.  Again, the theories $T_k$
are as in \eqref{eqn:no*} above. 

\begin{theorem}\label{thm:1}
Suppose each theory $T_k$ has a model-companion $T_k{}^*$, and
  \begin{equation}\label{eqn:*}
    T_0{}^*\included T_1{}^*\included T_2{}^*\included\dotsb
  \end{equation}
  Then the theory $\bigcup_{k\in\upomega}T_k$ has a model-companion,
  namely $\bigcup_{k\in\upomega}T_k{}^*$.
\end{theorem}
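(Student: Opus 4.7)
Write $T := \bigcup_{k} T_k$ and $T^* := \bigcup_{k} T_k^*$, both in the signature $\sig := \bigcup_{k} \sig_k$. The plan is to show $T^*$ is a model-companion of $T$ by verifying the two defining conditions separately: that $T$ and $T^*$ have the same universal consequences, and that $T^*$ is model-complete.

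For the universal part, it suffices to show every model of $T$ embeds in a model of $T^*$ and conversely. Given $\str M \models T$, I would invoke compactness on $T^* \cup \diag{\str M}$: any finite subset mentions only finitely many symbols of $\sig$ and finitely many sentences of $T^*$, so it sits inside $T_m^* \cup \diag{\str M \restriction \sig_m}$ for some $m$. Now $\str M \restriction \sig_m \models T_m$, and because $T_m$ and $T_m^*$ share universal consequences, $\str M \restriction \sig_m$ embeds in a model of $T_m^*$, which realizes the finite subset. The reverse direction is symmetric; there one uses that $\str N \models T^*$ entails $\str N \restriction \sig_k \models T_k^*$, because $T_k^* \included T^*$ by hypothesis~\eqref{eqn:*}.

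For model-completeness of $T^*$, I would use the Robinson-style criterion: it suffices to prove that any inclusion $\str M \included \str N$ between models of $T^*$ is elementary. Fix such $\str M \included \str N$. For each $k$ the reducts $\str M \restriction \sig_k \included \str N \restriction \sig_k$ are both models of $T_k^*$ (again by~\eqref{eqn:*}), so model-completeness of $T_k^*$ gives $\str M \restriction \sig_k \elsub \str N \restriction \sig_k$. Any $\sig$-formula $\phi(\vec x)$ uses only finitely many symbols and is therefore already a $\sig_k$-formula for some $k$; hence for any tuple $\vec a$ from $M$, satisfaction of $\phi(\vec a)$ in $\str M$ matches that in $\str N$ via the elementary inclusion of reducts. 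Thus $\str M \elsub \str N$.

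I do not anticipate a serious obstacle; the argument is largely bookkeeping. The chain hypothesis~\eqref{eqn:*} is used precisely to guarantee that reducts of $T^*$-models satisfy each $T_k^*$, not merely each $T_k$; without it, the reducts would inherit only the universal part of $T_k^*$, and the Robinson criterion could fail (as indeed it does for the vector-space example announced in the abstract). The one point to keep straight is which signature each reduct lives in, so as to match a given finite-signature formula with the appropriate $T_k$ or $T_k^*$.
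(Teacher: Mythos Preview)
Your proof is correct and follows essentially the same approach as the paper: compactness on the diagram to reduce the co-theory condition to the finite-signature case, then model-completeness at each level $k$ lifted via the observation that every formula lives in some $\sig_k$. The only cosmetic difference is that the paper verifies model-completeness via its stated definition (that $T^*\cup\diag{\str B}$ is complete for each $\str B\models T^*$), whereas you use the equivalent Robinson criterion that inclusions of models are elementary; both unwind to the same finite-signature reduction.
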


\begin{proof}
Write $U$ for $\bigcup_{k\in\upomega}T_k$, and $U^*$ for
$\bigcup_{k\in\upomega}T_k{}^*$. 
Suppose $\str A\models U$, and $\Gamma$ is a finite subset of
$U^*\cup\diag{\str A}$.  Then $\Gamma$ is a subset of
$T_k{}^*\cup\diag{\str A\restriction\sig_k}$ for some $k$ in
$\upomega$, and also $\str A\restriction\sig_k\models T_k$.  Since
$(T_k{}^*)_{\forall}\included T_k$, the structure $\str
A\restriction\sig_k$ must embed in a model of $T_k{}^*$; and this
model will be a model of $\Gamma$.  We conclude that $\Gamma$ is
consistent.  Therefore $U^*\cup\diag{\str A}$ is consistent.  Thus
$U^*{}_{\forall}\included U$.  By symmetry $U_{\forall}\included
U^*$. 

Similarly, if $\str B\models U^*$, then $T_k{}^*\cup\diag{\str
  B\restriction\sig_k}$ axiomatizes a complete theory in each case,
and therefore $U^*\cup\diag{\str B}$ is complete. 
\end{proof}

The foregoing proof does not require that the signatures $\sig_k$ form a chain,
but needs only that every finite subset of
$\bigcup_{k\in\upomega}\sig_k$ be included in some $\sig_k$.  This is
the setting for Medvedev's \cite[Prop.~2.4, p.~6]{Medvedev-preprint}, which then
has the same proof as the foregoing.  Also in Medvedev's setting, each $T_{k+1}{}^*$ is a conservative extension of $T_k{}^*$; but only
the weaker assumption $T_k{}^*\included T_{k+1}{}^*$ is needed in the
proof. 

Medvedev notes that many properties that the theories $T_k$ might have
are `local' and are therefore preserved in $\bigcup_{k\in\upomega}T_k$: examples are completeness, elimination of quantifiers, stability, and simplicity.  In her 
main application, $\sig_n$ is the signature of fields with singulary 
operation-symbols $\sigma_{m/n!}$, where $m\in\Z$; and $T_n$ is the
theory of fields on which the $\sigma_{m/n!}$ are automorphisms such that
\begin{equation*}
  \sigma_{k/n!}\circ\sigma_{m/n!}=\sigma_{(k+m)/n!}.
\end{equation*}
Then $T_n$ includes the theory $S_n$ of fields with
the single automorphism $\sigma_{1/n!}$.  
Using \cite[\S1]{MR2505433} (which is based on
\cite[ch.~5]{MR94e:03002}), we may observe at this point that
reduction of models of $T_n$
to models of $S_n$ is actually an equivalence of the categories
$\Mod{T_n}$ and $\Mod{S_n}$, whose objects are models of the
indicated theories, and whose morphisms are embeddings.
We thus have at hand a (rather simple) instance of the hypothesis of
the following theorem.  

\begin{theorem}\label{thm:bi}
  Suppose $(I,J)$ is a bi-interpretation of theories $S$ and $T$ such
  that $I$ is an equivalence of the categories $\Mod S$ and $\Mod T$.
  If $S$ has the model-companion $S^*$, and $S\included S^*$, then $T$
  also has a model-companion, which is the theory of those models
  $\str B$ of $T$ such that $J(\str B)\models S^*$.
\end{theorem}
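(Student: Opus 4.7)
The plan is to take $T^*$ to be the deductive closure of $T\cup\{\psi^J:\psi\in S^*\}$, where $\psi^J$ denotes the $T$-sentence obtained by pushing the $S$-sentence $\psi$ through the interpretation $J$. The models of $T^*$ will then be exactly those $\str B\models T$ with $J(\str B)\models S^*$, as required by the statement, and $T\included T^*$ by construction. I would then verify the two conditions for model-companionability: (i) $T_\forall=T^*{}_\forall$, and (ii) model-completeness of $T^*$.

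For (i), one direction is immediate from $T\included T^*$, so the content is to embed an arbitrary $\str A\models T$ into some model of $T^*$. I would apply $J$ to $\str A$ to get $J(\str A)\models S$, then use the model-companionship of $S^*$ over $S$ to embed $J(\str A)$ into some $\str C\models S^*$. The hypothesis $S\included S^*$ ensures $\str C\models S$, so $I$ applies and produces a $T$-model $I(\str C)$; functoriality of $I$ gives $IJ(\str A)\hookrightarrow I(\str C)$, and the natural isomorphism $\eta\colon IJ\cong\mathrm{id}_{\Mod T}$ supplied by the bi-interpretation yields the desired embedding $\str A\hookrightarrow I(\str C)$. Finally $I(\str C)\models T^*$ because $J(I(\str C))\cong\str C\models S^*$.

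For (ii), starting with an embedding $f\colon\str B_1\to\str B_2$ between models of $T^*$, I would transport it through $J$ to get an embedding $J(f)$ between models of $S^*$, which is elementary by the model-completeness of $S^*$. A standard consequence of the definition of interpretation is that it carries elementary embeddings to elementary embeddings, so $IJ(f)$ is elementary. The naturality of $\eta$ at $f$ then identifies $f$ with the conjugate $\eta_{\str B_2}\circ IJ(f)\circ\eta_{\str B_1}^{-1}$; since the components of $\eta$ are isomorphisms (as $\eta$ is an isomorphism of functors in the category of embeddings) and isomorphisms are automatically elementary, $f$ itself is elementary.

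The main obstacle I anticipate is not the overall architecture but the bookkeeping surrounding the bi-interpretation: specifically, confirming that an interpretation preserves elementarity of embeddings, and that the naturality square combined with the fact that $\eta$-components are structural isomorphisms really does transfer elementarity from $IJ(f)$ back to $f$. Both facts are formally routine but demand care with the definitions of interpretation, of the induced functor on the categories $\Mod S$ and $\Mod T$, and of the particular equivalence structure that the bi-interpretation provides.
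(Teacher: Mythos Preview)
Your proposal is correct and follows essentially the same route as the paper's proof: define $T^*$ so that its models are the $\str B\models T$ with $J(\str B)\models S^*$; embed an arbitrary $T$-model by passing through $J$, extending to an $S^*$-model, applying $I$, and using the natural isomorphism $IJ\cong\mathrm{id}$; and check model-completeness by transporting an embedding through $J$, invoking model-completeness of $S^*$, pulling back through $I$, and again using the equivalence. The paper is slightly terser---it does not spell out the naturality square or the preservation of elementarity under interpretation---but the architecture and the key steps are the same.
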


\begin{proof}
  The class of  models $\str B$ of $T$ such that $J(\str B)\models
  S^*$ is elementary.  Let $T^*$ be its theory.  Then $T\included
  T^*$.  Suppose $\str B\models T$.
  Then $J(\str B)\models S$, so $J(\str B)$ embeds in a model $\str A$
  of $S^*$.  Consequently $I(J(\str B))$ embeds in $I(\str A)$.  Also
  $I(\str A)\models T^*$, since $\str A\cong J(I(\str A))$.  Since
  also $\str B\cong I(J(\str B))$, we conclude that $\str B$ embeds in
  a model of $T^*$.  Finally, $T^*$ is model-complete.  Indeed,
  suppose now $\str B$ and $\str C$ are models of $T^*$ such that
  $\str B\included\str C$.  An embedding of $J(\str B)$ in $J(\str C)$ is induced,
  and these structures are models of $S^*$, so the embedding is
  elementary.  Therefore the induced embedding of $I(J(\str B))$ in
  $I(J(\str C))$ is also elementary.  By the equivalence of the categories, $\str
  B\preccurlyeq\str C$.
\end{proof}

In the present situation, the theory $S_n$ has a model-companion
\cite{MR99c:03046,MR2000f:03109}; let us denote this by $\ACFA_n$.
By the theorem then, $T_n$ has a model-companion $T_n{}^*$, which is
axiomatized by $T_n\cup\ACFA_n$.  We have $\ACFA_n\included
T_{n+1}{}^*$ by \cite[1.12, Cor.~1, p.~3013]{MR2000f:03109}.  By
Theorem~\ref{thm:1} then, $\bigcup_{n\in\upomega}T_n$ has a
model-companion, which is the union of the $T_n{}^*$.  Medvedev calls
this union $\Q\ACFA$; she shows for example that it preserves the
simplicity of the $\ACFA_n$, as noted above, though it does not
preserve their supersimplicity.

The following is similar to the result that the theory of fields with
a derivation \emph{and} an automorphism (of the field-structure only)
has no model-companion \cite{MR2114160}.  The obstruction lies in positive
characteristics $p$, where all derivatives of elements with $p$-th
roots must be $0$.

\begin{theorem}\label{thm:oDF}
  The theory $\oDF$ has no model-companion.
\end{theorem}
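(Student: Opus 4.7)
I would argue by contradiction. Suppose $T^*$ is a model-companion of $\oDF$, and fix a prime $p$; since $\oDF$ has models of characteristic $p$, so does $T^*$. The guiding intuition, hinted at in the remark preceding the theorem, is that in characteristic $p$ being a $p$-th power should coincide with the vanishing of every iterated derivative, an infinitary condition that no first-order model-companion can capture.

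The first step is to make this precise: in any $M\models T^*$ of characteristic $p$, an element $a\in M$ has a $p$-th root in $M$ iff $\partial_{\vec i}(a)=0$ for every non-empty finite tuple $\vec i$ of indices. The forward direction is immediate from $\partial_i(c^p)=p\,c^{p-1}\partial_i(c)=0$. For the converse, assuming $a\notin M^p$, I form the purely inseparable extension $M(b)$ with $b^p=a$ and extend each $\partial_i$ by declaring $\partial_i(b)=0$; the sole compatibility relation $p\,b^{p-1}\partial_i(b)=\partial_i(a)$ collapses to $0=0$, and the extended derivations commute because the bracket $[\partial_i,\partial_j]$ vanishes on both $M$ and $b$. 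Thus $M$ embeds in a model of $\oDF$ in which $a$ acquires a $p$-th root, so the model-completeness of $T^*$ (equivalently, existential closure of $M$ in $\oDF$) places the root already in $M$.

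The second step is compactness. The equivalence says that $T^*$, together with the characteristic-$p$ axiom and the partial type $\{\partial_{\vec i}(c)=0:\vec i\text{ non-empty}\}$ on a fresh constant $c$, entails $\exists y\;y^p=c$. A finite subtype therefore already suffices: there exists a finite set $F$ of non-empty tuples for which $T^*$ plus the characteristic-$p$ axiom proves $\forall x\bigl(\bigwedge_{\vec i\in F}\partial_{\vec i}(x)=0\to\exists y\;y^p=x\bigr)$. Pick some $j_0\in\upomega$ occurring in no tuple of $F$.

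The third step refutes this $F$-implication by a direct construction. Let $K=\F_p(a,u_0,u_1,\dots)$ with $a,u_0,u_1,\dots$ algebraically independent over $\F_p$, let $\partial_{j_0}$ act as $\partial/\partial a$ (and as zero on every $u_i$), and for each $j\neq j_0$ let $\partial_j$ act as $\partial/\partial u_j$. These are pairwise commuting derivations, so $K\models\oDF$, and $a$ is certainly not a $p$-th power in $K$. Because every tuple in $F$ avoids $j_0$, its innermost derivation already kills $a$; hence $\partial_{\vec i}(a)=0$ for each $\vec i\in F$, while $\partial_{j_0}(a)=1\neq 0$. Embedding $K$ into any $L\models T^*$ preserves both these equations and this inequation, so the compact consequence of $T^*$ makes $a$ a $p$-th power in $L$, which forces $\partial_{j_0}(a)=0$, a contradiction. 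The main subtlety is in step one --- extending the derivations to an inseparable $p$-th root while preserving their commutation --- but the choice $\partial_i(b)=0$ makes every relation that needs verification trivial.
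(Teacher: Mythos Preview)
Your proof is correct and exploits the same characteristic-$p$ obstruction as the paper, but the packaging differs. The paper invokes the Eklof--Sabbagh criterion (an $\forall\exists$ theory is companionable iff its existentially closed models form an elementary class), produces for each $n$ an existentially closed model $\str A_n$ of $\oDF$ containing $\F_p(\alpha)$ with $\partial_k\alpha=\delta_{kn}$, and then shows that a non-principal ultraproduct of the $\str A_n$ fails to be existentially closed, since there $\alpha$ has all derivatives zero yet still lacks a $p$-th root. You instead work directly with a hypothetical model-companion $T^*$: your Step~1 isolates the biconditional between $p$-th powers and vanishing of all derivatives in models of $T^*$, and your Steps~2--3 are essentially the compactness contrapositive of the paper's ultraproduct step. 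Your route avoids both the Eklof--Sabbagh criterion and the explicit construction of existentially closed models, at the cost of the mild extra work in Step~1 (verifying that the derivations extend compatibly to the purely inseparable extension); the paper's route, on the other hand, makes the existentially closed models of $\oDF$ in characteristic $p$ more visible. Both arguments are short, and neither is clearly superior.
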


\begin{proof}
We use that an $\forall\exists$ theory $T$ has a model-companion if
and only if the class of its \emph{existentially closed} models is
elementary, and in this case the model-companion is the theory of this
class \cite{MR0277372}.  (A model $\str A$ of $T$ is an
\textbf{existentially closed} model, provided that if $\str B\models
T$ and $\str A\included\str B$, then $\str A\preccurlyeq_1\str B$,
that is, all quantifier-free formulas over $A$ that are soluble in
$\str B$ are soluble in $\str A$.)  For each $n$ in $\upomega$, the
theory $\oDF$ has an
existentially closed model $\str A_n$, whose underlying field includes
$\mathbb F_p(\alpha)$, where $\alpha$ is transcendental; and in this model,
\begin{equation*}
  \partial_k\alpha=
  \begin{cases}
    1,&\text{ if }k=n,\\
0,&\text{ otherwise.}
  \end{cases}
\end{equation*}
Then $\alpha$ has no $p$-th root in $\str A_n$.  Therefore, in a
non-principal ultraproduct of the $\str A_n$, $\alpha$ has no
$p$-th root, although $\partial_n\alpha=0$ for all $n$ in $\upomega$, so that $\alpha$ does have a $p$-th root in some extension.
Thus the ultraproduct is not an existentially closed model of
$\oDF$.  Therefore the class of existentially closed models of $\oDF$ is not elementary.
\end{proof}

It follows then by Theorem~\ref{thm:1} that $\mDCF\nincluded\mpDCF$
for at least one $m$.  In fact this is so for all $m$, since
\begin{equation*}
  \mDCF\vdash p=0\lto\Forall
  x\Bigl(\bigwedge_{i<m}\partial_ix=0\lto\Exists yy^p=x\Bigr),
\end{equation*}
but $\mpDCF$ does not  entail this sentence, since
\begin{equation*}
  \mpDCF\vdash\Exists
x\bigl(\bigwedge_{i<m}\partial_ix=0\land\partial_mx\neq0\bigr).
\end{equation*}
However, this observation by
itself is not enough to establish the last theorem.  For, by the
results of \cite{MR2505433}, it is possible for each $T_k$ to have a
model-companion $T_k{}^*$, while $\bigcup_{k\in\upomega}T_k$ has a model-companion that
is not $\bigcup_{k\in\upomega}T_k{}^*$.  We may even require $T_{k+1}$ to be a conservative extension of $T_k$.

Indeed, if $k>0$, then in the notation of \cite{MR2505433}, $\VSp k$
is the theory of vector-spaces with their scalar-fields in the
signature $\{+,-,\bm0,\circ,0,1,*,P^k\}$, where $\circ$ is
multiplication of scalars, and $*$ is the action of the scalar-field
on the vector-space, and $P^k$ is $k$-ary linear dependence.  In
particular, $P^2$ may written also as $\parallel$.
Then $\VSp k$ has a model-companion, $\VSpst k$, which is the theory
of $k$-dimensional vector-spaces over algebraically
closed fields \cite[Thm~2.3]{MR2505433}.
Let
$\VSp{\upomega}=\bigcup_{1\leq k<\upomega}\VSp k$.  (This was
called $\VSp{\infty}$ in \cite{MR2505433}.)  This theory has the
model-companion $\VSpst{\upomega}$, which is the theory of
infinite-dimensional vector-spaces over algebraically closed
fields \cite[Thm~2.4]{MR2505433}.  In particular $\VSpst{\upomega}$ is not the union of the $\VSpst k$, because these are mutually inconsistent.  We now turn this into a result about chains:

\begin{theorem}\label{thm:vs}
If $1\leq n<\upomega$, let $T_n$ be the theory axiomatized by $\VSp 1\cup\dots\cup\VSp n$.
Then $T_n$ has a
  model-companion $T_n{}^*$, which is axiomatized by $T_n\cup\VSpst n$.  Also $T_{n+1}$ is a conservative extension of $T_n$.
  However, the model-companion $\VSpst{\upomega}$ of the union
  $\VSp{\upomega}$ of the
  chain $(T_n\colon 1\leq n<\upomega)$ is not the union of the $T_n{}^*$.
\end{theorem}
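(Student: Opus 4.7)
The plan is to verify the three assertions in turn, exploiting the fact that each predicate $P^k$ is given by an explicit existential formula in the pure vector-space language: $P^k(v_1,\dots,v_k)$ holds if and only if there exist scalars $\lambda_1,\dots,\lambda_k$, not all zero, with $\sum_i \lambda_i * v_i = \bm 0$. In particular, in any model of $T_n$ all of $P^1,\dots,P^n$ are forced to agree with this definition.

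To show $T_n^* := T_n \cup \VSpst n$ is a model-companion of $T_n$, I check the two required conditions. First, every $\str M \models T_n$ embeds in a model of $T_n^*$: the reduct of $\str M$ to the signature $\{+,-,\bm 0,\circ,0,1,*,P^n\}$ of $\VSp n$ is a model of $\VSp n$ and so embeds into some $\str N \models \VSpst n$; expanding $\str N$ by interpreting each $P^k$ (for $k<n$) as $k$-ary linear dependence yields a model $\str N^+$ of $T_n^*$, and since any vector-space embedding preserves and reflects linear dependence of $k$-tuples, the given embedding lifts to an embedding of $\str M$ into $\str N^+$ in the signature $\sig_n$. Second, $T_n^*$ is model-complete: given $\str A \included \str B$ both models of $T_n^*$, the reducts to the signature of $\VSp n$ are models of the model-complete theory $\VSpst n$, so the inclusion is elementary there; and since the axioms of $T_n$ equate each $P^k$ with its existential vector-space definition, every $\sig_n$-formula is equivalent modulo $T_n^*$ to a formula in the signature of $\VSp n$, whence the inclusion is elementary in $\sig_n$ as well.

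The conservative-extension claim follows the template of the introductory remark on $\mDF$: every model of $T_n$ expands to a model of $T_{n+1}$ by interpreting $P^{n+1}$ as $(n+1)$-ary linear dependence. For the final claim, since $T_n^* \includes \VSpst n$ entails that the vector-space has dimension exactly $n$, the theories $T_n^*$ for different $n$ are mutually inconsistent, so $\bigcup_n T_n^*$ is an inconsistent set of sentences, whereas $\VSpst{\upomega}$ is consistent, its models being the infinite-dimensional vector-spaces over algebraically closed fields.

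The main obstacle is the model-completeness step, since the primitive predicates $P^1,\dots,P^{n-1}$ lie outside the signature of $\VSpst n$ and model-completeness need not be inherited under signature expansion; the existential-definability observation above handles this by reducing every $\sig_n$-formula, modulo $T_n^*$, to one in the signature of $\VSp n$.
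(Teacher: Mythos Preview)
Your argument that every model of $T_n$ embeds in a model of $T_n{}^*$ has a gap: the assertion that ``any vector-space embedding preserves and reflects linear dependence of $k$-tuples'' is false for $k<n$. A $\VSp n$-embedding need not reflect $k$-ary independence. Concretely, with $n=3$, let $\str M$ be $\Q^2$ over $\Q$, a model of $T_3$ in which every triple of vectors is dependent. Map its $\VSp3$-reduct into $\str N=\overline{\Q}{\,}^3\models\VSpst3$ by $(a,b)\mapsto(a+b\surd2,0,0)$, with the inclusion $\Q\hookrightarrow\overline{\Q}$ on scalars. This is a genuine $\VSp3$-embedding, since every triple from the one-dimensional image satisfies $P^3$; yet the $P^2$-independent pair $(1,0),(0,1)$ in $\str M$ is sent to the dependent pair $(1,0,0),(\surd2,0,0)$ in $\str N$. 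The underlying point is that your scalar-based definition makes $P^k$ existential but not $\lnot P^k$, and embeddings preserve existential formulas only upward.

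The repair---and this is essentially what the paper does---is to first embed $\str M$ into a model of $T_n$ of dimension at least $n$, and then observe that in such models $\lnot P^k$ is \emph{also} existential in the signature of $\VSp n$: one has $\lnot P^k\vec x_0\cdots\vec x_{k-1}$ if and only if $\Exists{(\vec x_k,\dots,\vec x_{n-1})}\lnot P^n\vec x_0\cdots\vec x_{n-1}$. With both $P^k$ and $\lnot P^k$ existential over $\VSp n$, any $\VSp n$-embedding between spaces of dimension at least $n$ does respect every $P^k$, and your lifting goes through. The paper organizes this via the auxiliary theory $\VSpm n$ (models of $\VSp n$ of dimension at least $n$), gives in addition an existential definition of $P^{k+1}$ in terms of $P^k$ and $P^n$, and then invokes the categorical machinery of Theorem~\ref{thm:bi}. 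Your model-completeness step and the remaining two claims are correct as written.
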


\begin{proof}
Every vector-space can be considered as a model of every $\VSp k$ and
hence of every $T_k$.  In particular, $T_{n+1}$ is a conservative
extension of $T_n$.  If the theories $T_n{}^*$ are as claimed, then
they are mutually inconsistent, and so $\VSpst{\upomega}$ is not their
union.  It remains to show that there are theories $T_n{}^*$ as
claimed.  We already know this when $n=1$.  For the other cases, if
$1\leq k<n$, we define the relations $P^k$ in models of $\VSp n$ of
dimension at least $n$. 

Let $\VSpm n$ the theory of such models: that is, $\VSpm n$ is
axiomatized by $\VSp n$ and the requirement that the space have
dimension at least $n$.  The relation $P^1$ is defined in models of
$\VSpm n$ (and indeed in models of $\VSp n$) by the quantifier-free
formula $\vec x=\vec0$.  If $n>2$, then there are existential formulas
that, in each model of $\VSpm n$, define the relation $\parallel$ and
its complement \cite[\S2, p.~431]{MR2505433}.  More generally, if
$1\leq k<n-1$, then, using existential formulas, we can define
$P^{k+1}$ and its complement in models of $T_k\cup\VSpm n$ or just
$\VSp k\cup\VSpm n$. Indeed, $\lnot P^{k+1}\vec x_0\cdots\vec x_k$ is
equivalent to $\Exists{(\vec x_{k+1},\dots,\vec x_{n-1})}\lnot P^n\vec
x_0\cdots\vec x_{n-1}$, and $P^{k+1}\vec x_0\cdots\vec x_k$ is
equivalent to 
  \begin{equation*}
\Exists{(\vec x_{k+1},\dots,\vec x_n)}\biggl(P^k\vec x_1\cdots\vec
x_k\lor\Bigl(\lnot P^n\vec x_1\cdots\vec
x_n\land\bigwedge_{j=k+1}^nP^n\vec x_0\cdots\vec x_{j-1}\vec
x_{j+1}\cdots\vec x_n\Bigr)\biggr). 
\end{equation*}
For, in a space of dimension at least $n$, if $(\vec a_0,\dots,\vec
a_k)$ is linearly dependent, but $(\vec a_1,\dots,\vec a_k)$ is not,
this means precisely that $(\vec a_1,\dots,\vec a_n)$ is independent
for some $(\vec a_{k+1},\dots,\vec a_n)$, but $\vec a_0$ is a
\emph{unique} linear combination of $(\vec a_1,\dots,\vec a_n)$, and
in fact of $(\vec a_1,\dots,\vec a_{j-1},\vec a_{j+1},\dots\vec a_n)$
whenever $k+1\leq j\leq n$, and (therefore) of $(\vec a_1,\dots,\vec
a_k)$. 

By \cite[Lem~1.1, 1.2]{MR2505433}, if $1\leq k<n-1$, we now have that
reduction from models of $T_{k+1}\cup\VSpm n$ to models of
$T_k\cup\VSpm n$ is an equivalence of the categories
$\Mod{T_{k+1}\cup\VSpm n}$ and $\Mod{T_k\cup\VSpm n}$.  Combining
these results for all $k$, we have that reduction from models of
$T_{n-1}\cup\VSpm n$ to models of $\VSpm n$ is an equivalence of the
categories $\Mod{T_{n-1}\cup\VSpm n}$ and $\Mod{\VSpm n}$.  Since
$\VSp n\included\VSpm n$ and every model of $\VSp n$ embeds in a model
of $\VSpm n$, the two theories have the
same model-companion, namely $\VSpst n$.  Similarly, $T_n$ and
$T_{n-1}\cup\VSpm n$ have the same model-companion; and by
Theorem~\ref{thm:bi}, this is axiomatized by $T_n\cup\VSpst n$. 
\end{proof}

A one-sorted version of the last theorem can be developed as follows.
Let $\VSpr n$ comprise the sentences of $\VSpm n$ having one-sorted
signature $\{\vec0,-,+,P^n\}$ of the sort of vectors alone. It is not
obvious that all models of $\VSpm n$ can be furnished with
scalar-fields to make them models of $\VSpr n$ again; but this will be
the case.  By \cite[Thm~1.1]{MR2505433}, it is the case when $n=2$:
reduction of models of $\VSpm2$ to models of $\VSpr2$ is an
equivalence of the categories $\Mod{\VSpm2}$ and $\Mod{\VSpr2}$.  This
reduction is therefore \textbf{conservative,} by the definition of
\cite[p.~426]{MR2505433}.  It is said further at
\cite[p.~431]{MR2505433} that reduction from $\VSpm n$ to $\VSpr n$ is
conservative when $n>2$; but the details are not spelled out.
However, the claim can be established as follows.  Immediately,
reduction from $\VSp2\cup\VSpm n$ to $\VSpr2\cup\VSpr n$ is
conservative.  In particular, models of the latter set of sentences
really are vector-spaces without their scalar-fields.  It is noted in
effect in the proof of Theorem \ref{thm:vs} that reduction from
$\VSp2\cup\VSpm n$ to $\VSpm n$ is conservative.  Furthermore, in
models of the latter theory, the defining of parallelism and its
complement is done with existential formulas \emph{in the signature of
  vectors alone.}  Therefore reduction from $\VSpr2\cup\VSpr n$ to
$\VSpr n$ is conservative.  We now have the following commutative
diagram of reduction-functors, three of them being conservative, that
is, being equivalences of categories. 
\begin{equation*}
\xymatrix{\Mod{\VSp2\cup\VSpm n}\ar[r]\ar[d]&\Mod{\VSpm n}\ar[d]\\
\Mod{\VSpr2\cup\VSpr n}\ar[r]&\Mod{\VSpr n}}
\end{equation*}
Therefore the remaining reduction, from $\VSpm n$ to $\VSpr n$, must
be conservative. 

Now there is a version of Theorem \ref{thm:vs} where $T_n$ is
axiomatized by $\VSpr2\cup\dots\cup\VSpr n$.  Indeed, by Theorem
\ref{thm:bi}, $T_n$ has a model-companion, which is the theory (in the
same signature) of $n$-dimensional vector-spaces over algebraically
closed fields; and the union of the $T_n$ has a model-companion, which
is the theory of infinite-dimensional vector-spaces over algebraically
closed fields; but this theory is not the union of the
model-companions of the $T_n$. 

The implication $\ref{ext}\Rightarrow\ref{red}$ in the following is
used implicitly at \cite[1.12, p.~3013]{MR2000f:03109} to establish
the result used above, that if $(K,\sigma)$ is a model of $\ACFA$,
then so is $(K,\sigma^m)$, assuming $m\geqslant 1$. 

\begin{theorem}\label{thm:cond}
Assuming as usual $T_0\included T_1$, where each $T_k$ has signature
$\sig_k$, we consider the following conditions. 
\begin{compactenum}[$A$.]
\item\label{ext}
For every model $\str A$ of $T_1$ and model $\str B$ of
  $T_0$ such that
  \begin{equation}\label{eqn:B}
  \str A\restriction\sig_0\included\str B,  
  \end{equation}
there is a model $\str C$ of $T_1$ such that
\begin{align}\label{eqn:ABC}
\str A&\included\str C,&
\str B&\included\str C\restriction\sig_0.
\end{align}
\item\label{red}
The reduct to $\sig_0$ of every existentially closed model of $T_1$ is an
existentially closed model of $T_0$.
\item\label{AP}
$T_0$ has the \emph{Amalgamation Property:} if one model embeds in two others, then those two in turn embed in a fourth model, compatibly with the original embeddings.
\item\label{ec}
$T_1$ is $\forall\exists$ (so that every model embeds in an existentially closed model).
\end{compactenum}
We have the two implications
\begin{align*}
\ref{ext}&\implies\ref{red},&
\ref{red}\And\ref{AP}\And\ref{ec}&\implies\ref{ext},
\end{align*}
but there is no implication among the four conditions that does not follow from these.  This is true, even if $T_1$ is required to be a conservative extension of $T_0$.
\end{theorem}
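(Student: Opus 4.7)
The plan splits into three parts: the two stated implications and the refutation of every other candidate implication. \emph{For $A \Rightarrow B$:} let $\str M$ be an existentially closed model of $T_1$, and suppose $\str M \restriction \sig_0 \included \str N \models T_0$. Applying A with $\str A = \str M$ and $\str B = \str N$ yields $\str C \models T_1$ with $\str M \included \str C$ and $\str N \included \str C \restriction \sig_0$. Existential closure of $\str M$ in $T_1$ gives $\str M \preccurlyeq_1 \str C$, and restriction to $\sig_0$ gives $\str M \restriction \sig_0 \preccurlyeq_1 \str C \restriction \sig_0$. Hence any quantifier-free $\sig_0$-formula over $M$ soluble in $\str N$ is soluble in $\str C \restriction \sig_0$ and thence in $\str M \restriction \sig_0$, so the latter is existentially closed in $T_0$.

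\emph{For $B \wedge C \wedge D \Rightarrow A$:} given $\str A \models T_1$ and $\str B \models T_0$ with $\str A \restriction \sig_0 \included \str B$, use D to embed $\str A$ in an existentially closed $\str A' \models T_1$. By B, $\str A' \restriction \sig_0$ is existentially closed in $T_0$; by C, amalgamate $\str A' \restriction \sig_0$ with $\str B$ over $\str A \restriction \sig_0$ inside some $\str D \models T_0$. Existential closure yields $\str A' \restriction \sig_0 \preccurlyeq_1 \str D$, so compactness applied to the $\sig_1$-elementary diagram of $\str A'$ together with the $\sig_0$-diagram of $\str D$ produces a $\sig_1$-elementary extension $\str C$ of $\str A'$ whose $\sig_0$-reduct admits $\str D$ over $\str A' \restriction \sig_0$. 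After renaming, $\str C$ witnesses condition A.

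\emph{Independence.} The two implications exclude from the admissible truth-sets exactly those containing $A$ without $B$, and the single set $\{B,C,D\}$; this leaves eleven admissible configurations. The plan is to refute each non-derivable candidate implication by exhibiting a conservative pair $T_0 \included T_1$ whose truth-set is admissible but contradicts the implication. A handful of purpose-built constructions should suffice: trivial theories in empty signatures realize the all-true end; modifying the vector-space chain of Theorem~\ref{thm:vs} or the derivation setup of Theorem~\ref{thm:oDF} produces several intermediate configurations, since those already witness failures of specific conjunctions; adjoining to a well-behaved $T_0$ an axiom outside the $\forall\exists$ fragment breaks D while preserving the other properties; and drawing $T_0$ from a theory with known failure of amalgamation isolates the failure of C. The main obstacle is this combinatorial case-analysis: one must systematically produce an example for each admissible configuration and verify that conservativity is preserved throughout. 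The subtlest cases are those in which $A$ must hold while $C$ or $D$ fails, since the argument above derives $A$ precisely from the conjunction $B \wedge C \wedge D$, so alternative routes to the extension property in $A$ must be engineered by hand.
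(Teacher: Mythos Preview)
Your arguments for the two implications are correct and match the paper's proofs almost exactly. For $A\Rightarrow B$ you reproduce the paper's argument verbatim. For $B\wedge C\wedge D\Rightarrow A$, the paper reduces by compactness to a single existential $\sig_0$-sentence over $A$ and realises it in the existentially closed extension, while you amalgamate first and then run compactness against the full elementary diagram of $\str A'$; the underlying mechanism (existential closure plus amalgamation reflected back through $\preccurlyeq_1$) is the same, and your version is a harmless repackaging.

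The independence part, however, is not a proof but a hope. You correctly count that eleven truth-configurations must be realised, but you do not construct a single one of them. The paper devotes the bulk of its proof to building these eleven examples explicitly, and the constructions are far more delicate than your sketch suggests. Several of the hardest cases (for instance, realising ``all four fail'', or ``only $B$ holds'', or ``only $B$ and $C$ hold'') use fields of characteristic $p$ with carefully chosen distinguished elements and conditions on $p$-independence; these have nothing to do with the vector-space chain of Theorem~\ref{thm:vs} or the $\oDF$ example. Your proposed toolkit (trivial theories, tweaks of the vector-space or derivation examples, tacking on a non-$\forall\exists$ axiom) would handle a few of the easier columns but would not reach, say, a pair where $T_0$ genuinely lacks amalgamation while $T_1$ is $\forall\exists$ and $A$ fails but $B$ holds. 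You also need, in every case, to check that $T_0$ is actually the $\sig_0$-consequence set of $T_1$ (so that conservativity holds), which in the paper's examples requires separate argument. In short, the combinatorial case-analysis you flag as ``the main obstacle'' is in fact the entire content of this half of the theorem, and it is missing.
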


\begin{proof}
Suppose $\ref{ext}$ holds.
Let $\str A$ be an existentially closed model of $T_1$, and let $\str B$ be an arbitrary model of $T_0$ such that \eqref{eqn:B} holds.  By hypothesis, there is a model $\str C$ of $T_1$ such that \eqref{eqn:ABC} holds.  Then
 $\str
A\preccurlyeq_1\str C$, and therefore
$\str A\restriction\sig_0\preccurlyeq_1\str C\restriction\sig_0$, 
and \emph{a fortiori}
$\str A\restriction\sig_0\preccurlyeq_1\str B$.  Therefore $\str
A\restriction\sig_0$ must be an existentially closed model of $T_0$.  Thus $\ref{red}$ holds.

Suppose conversely $\ref{red}$ holds, along with $\ref{AP}$ and $\ref{ec}$.  Let $\str A\models T_1$ and $\str B\models T_0$ such that \eqref{eqn:B} holds.  We establish the consistency of
$T_1\cup\diag{\str A}\cup\diag{\str B}$.
It is enough to show the consistency of
\begin{equation}\label{eqn:T}
T_1\cup\diag{\str A}\cup\{\Exists{\vec x}\phi(\vec x)\},
\end{equation}
where $\phi$ is an arbitrary quantifier-free formula of $\sig_0(A)$ that is soluble in $\str B$.
By $\ref{ec}$, there is an existentially closed model $\str C$ of $T_1$ that extends $\str A$.  By $\ref{red}$ then, $\str C\restriction\sig_0$ is an existentially closed model of $T_0$ that extends $\str A\restriction\sig_0$.  By $\ref{AP}$, both $\str B$ and $\str C\restriction\sig_0$ embed over $\str A\restriction\sig_0$ in a model of $T_0$.  In particular, $\phi$ will be soluble in this model.  Therefore $\phi$ is already soluble in $\str C\restriction\sig_0$ itself.  Thus $\str C$ is a model of \eqref{eqn:T}.  Therefore $\ref{ext}$ holds.

The foregoing arguments eliminate the five possibilities marked $X$ on the table below, \begin{table}[ht]
\begin{equation*}
\begin{array}{r|*{16}{c}} &\text{\ref{1}}&X&\text{\ref{2}}&\text{\ref{3}}&\text{\ref{4}}&X&\text{\ref{5}}&\text{\ref{6}}&\text{\ref{7}}&X&\text{\ref{8}}&\text{\ref{9}}&\text{\ref{10}}&X&X&\text{\ref{11}}\\\hline
\ref{ext}&0&1&0&1&0&1&0& 1&0&1&0&  1&0&1&0& 1\\
\ref{red}&0&0&1&1&0&0&1& 1&0&0&1&  1&0&0&1& 1\\
\ref{AP}&0&0&0&0&1&1&1& 1&0&0&0&  0&1&1&1& 1\\
\ref{ec}&0&0&0&0&0&0&0& 0&1&1&1&  1&1&1&1& 1
\end{array}
\end{equation*}
\end{table}
where $0$ means false, and $1$, true.  We give examples of each of the
remaining cases, numbered according to the table.  In each example,
$T_0$ will be the reduct of $T_1$ to $\sig_0$.  We shall denote by
$\sig_{\mathrm f}$ the signature $\{+,{}\cdot{},-,0,1\}$ of fields;
and by $\sig_{\mathrm{vs}}$, the signature $\{+,-,\bm0,\circ,0,1,*\}$
of vector-spaces as two-sorted structures. 
\begin{asparaenum}[1.]
\item\label{1}
We first give an example in which none of the four lettered conditions
hold.  Let $\sig_0=\sig_{\mathrm f}\cup\{a,b\}$ and
$\sig_1=\sig_0\cup\{c\}$.  Let $T_1$ be the theory of fields of
characteristic $p$ with distinguished elements $a$, $b$, and $c$ such
that $\{a,c\}$ or $\{b,c\}$ is $p$-independent, and if $\{b,c\}$ is
$p$-independent, then so is $\{b,c,d\}$ for some $d$.  Then $T_0$ is
the theory of fields of characteristic $p$ in which, for some $c$,
$\{a,c\}$ or $\{b,c\}$ is $p$-independent, and if $\{b,c\}$ is
$p$-independent, then so is $\{b,c,d\}$ for some $d$.  The negations
of the four lettered conditions are established as follows.
Throughout, $a$, $b$, $c$, and $d$ will be algebraically independent
over $\F_p$. 
\begin{compactitem}
\item[$\lnot\ref{ext}$.]
We have
\begin{align*}
(\F_p(a,b^{1/p},c),a,b,c)&\models T_1,&
(\F_p(a,b^{1/p},c^{1/p}),a,b)&\models T_0,
\end{align*} 
but if $(\F_p(a,b^{1/p},c),a,b,c)$ is a substructure of a model $(K,a,b,c)$ of $T_1$, then $K$ cannot contain $c^{1/p}$.
\item[$\lnot\ref{red}$.]
$T_0$ has no existentially closed models, since an element of a model that is $p$-independent from $a$ or $b$ will always have a $p$-th root in some extension.  Similarly, no model of $T_1$ in which $\{a,c\}$ is not $p$-independent is existentially closed.
But $T_1$ does have existentially closed models, which are just the separably closed fields of characteristic $p$ with $p$-basis $\{a,c\}$ and with an additional element $b$. 
\item[$\lnot\ref{AP}$.]
$T_0$ does not have the Amalgamation Property, since $(\F_p(a,b^{1/p},c),a,b)$ and $(\F_p(a^{1/p},b,c,d),a,b)$ are models that do not embed in the same model over the common substructure $(\F_p(a,b,c),a,b)$, which is a model of $T_0$.
\item[$\lnot\ref{ec}$.]
$T_1$ is not $\forall\exists$, since, as we have already noted, models in which $\{a,c\}$ is not $p$-independent do not embed in existentially closed models.
\end{compactitem}
\item\label{2}
For an example of the column headed by $\ref{2}$ in the table, we let
$\sig_0$ and $\sig_1$ be as in $\ref{1}$; but now $T_1$ is the theory
of fields of characteristic $p$ with distinguished elements $a$, $b$,
and $c$ such that $\{a,c,d\}$ or $\{b,c,d\}$ is $p$-independent
for some $d$.  This ensures that $T_1$ has no
existentially closed models, so $\ref{red}$ holds vacuously; but the
other three conditions still fail. 
\item\label{3}
$T_0$ and $T_1$ are the same theory, so $\ref{ext}$ and $\ref{red}$
  hold trivially; and this theory is the theory of vector-spaces of
  dimension at least $2$, in the signature $\sig_{\mathrm{vs}}$, so
  the theory
  neither has the Amalgamation Property, nor is $\forall\exists$. 
\item\label{4}
$T_1$ is $\DF_p$ with the additional requirement that the field have
  $p$-dimension at least $2$; and $\sig_0=\sig_{\mathrm f}$, so $T_0$
  is the theory of fields of characteristic $p$ with $p$-dimension at
  least $2$.  The latter theory has the Amalgamation Property; but the
  other conditions fail.  Indeed, let $(\F_p(a,b),D)$ be the model of
  $T_1$ in which $Da=1$ and $Db=0$: then the field $\F_p(a,b)$ embeds
  in $\F_p(a^{1/p},b)$, which is a model of $T_0$, but $D$ does not
  extend to this field.  Also, $T_0$ has no existentially closed
  models; but $T_1$ does, and indeed it has a model-companion, namely
  $\DCF_p$.  Also $T_1$ is not $\forall\exists$, since $T_0$ is not:
  there is a chain of models of the latter, whose union is not a
  model, and we can make the structures in the chain into models of
  $T_1$ by adding the zero derivation. 
\item\label{5}
$\sig_0=\sig_{\mathrm f}$, and $\sig_1=\sig_0\cup\{a\}$.  $T_1$ is the
  theory of fields of characteristic $p$ with distinguished element
  $a$, which is $p$-independent from another element; so $T_0$ is (as
  in $\ref{4}$) the theory of fields of characteristic $p$ with
  $p$-dimension at least $2$.  Then we already have that $\ref{AP}$
  holds.  But $\ref{ext}$ fails: just let $\str A$ be $(\F_p(a,b),a)$,
  and let $\str B$ be $\F_p(a^{1/p},b)$.  Also $T_1$ has no
  existentially closed models, so $\ref{red}$ holds trivially, but
  $T_1$ is not $\forall\exists$.
\item\label{6}
$T_0$ and $T_1$ are the same, namely the theory of fields of characteristic $p$ of positive $p$-dimension, in the signature of fields, so this theory has the Amalgamation Property, but is not $\forall\exists$.
\item\label{7}
$\sig_0=\sig_{\mathrm{vs}}$, $\sig_1=\sig_0\cup\{\parallel,\vec a,\vec
  b\}$, and $T_1$ is axiomatized by $\VSp2\cup\{\vec a\nparallel\vec
  b\}$, so it is $\forall\exists$.  Then $T_0$ is the theory of
  vector-spaces of dimension at least $2$.  As in Theorem~\ref{thm:vs}
  above, $T_1$ has a model-companion, namely the theory of
  vector-spaces over algebraically closed fields with basis $\{\vec
  a,\vec b\}$.  But $T_0$ has no existentially closed models, since
  for all independent vectors $\vec a$ and $\vec b$ in some model, the
  equation
  \begin{equation}\label{eqn:7}
 x*\vec a+y*\vec b=\vec 0
  \end{equation}
  is always soluble in some
  extension.  Thus $\ref{red}$ fails.  Then $T_0$ also does not have
  the Amalgamation Property, since the solutions of \eqref{eqn:7}
  may satisfy $2x^2=y^2$ in one extension, but $3x^2=y^2$ in another.
  Similarly, $\ref{ext}$ fails, since the reduct to $\sig_0$ of a
  model of $T_1$ may embed in a model of $T_0$ in which $\vec a$ and
  $\vec b$ are parallel. 
\item\label{8}
$\sig_0=\sig_{\mathrm{vs}}\cup\{\parallel\}$, $\sig_1=\sig_0\cup\{\vec
  a,\vec b\}$, and $T_1$ is axiomatized by $\VSp2$ together with 
\begin{equation}\label{eqn:ab}
\Forall x\Forall y(x*\vec a+y*\vec b=\vec0\lto2x^2=y^2).
\end{equation}
Then $T_0$ is the theory of vector-spaces such that either the
dimension is at least $2$, or the scalar field contains $\surd2$.  As
in $\ref{7}$, $T_0$ does not have the Amalgamation Property.
The theory $T_1$ is $\forall\exists$.  It also has the model $(\Q*\vec
a\oplus\Q*\vec b,\vec a,\vec b)$, and $\Q*\vec a\oplus\Q*\vec b$
embeds in the model $\Q(\surd2,\surd3)*\vec a$ of $T_0$ when we let
$\vec b=\surd3*\vec a$; but then the latter space embeds in no space
in which $\vec a$ and $\vec b$ are as required by \eqref{eqn:ab}.  So
$\ref{ext}$ fails.  Finally, $T_1$ has a model-companion, axiomatized
by $\VSpst2$ together with 
\begin{equation*}
\Exists x\Exists y(x*\vec a+y*\vec b=\vec0\land 2x^2=y^2\land x\neq0);
\end{equation*}
and $T_0$ has a model-companion, which is just $\VSpst2$; so $\ref{red}$ holds.
\item\label{9}
$T_0$ and $T_1$ are both $\VSp1$.
\item\label{10}
$T_1=\DF_p$, and $T_0$ is the reduct to $\sig_{\mathrm f}$, namely field-theory in characteristic~$p$.
\item\label{11}
$T_0$ and $T_1$ are both field-theory. \qedhere
\end{asparaenum}
%\end{asparaenum}
\end{proof}

Now let $\oDCF_0=\bigcup_{m\in\upomega}\mDCF_0$.  We obtain a positive
application of Theorem~\ref{thm:1}.

\begin{theorem}\label{thm:oDF0}
For all $m$ in $\upomega$,
\begin{equation*}
\mDCF_0\included\mpDCF_0.
\end{equation*}
Therefore $\oDF_0$ has a model-companion, which is $\oDCF_0$.  This theory admits
  full elimination of quantifiers, is complete, and is properly stable.  
\end{theorem}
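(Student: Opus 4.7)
The plan is to establish first the chain $\mDCF_0\included\mpDCF_0$, then apply Theorem~\ref{thm:1} for the model-companion assertion, and finally derive the further model-theoretic properties by a locality argument.

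For the inclusion, let $\str K=(K,\partial_0,\dots,\partial_m)\models\mpDCF_0$ and write $\str K'$ for its reduct to $\{0,1,-,+,{\cdot},\partial_0,\dots,\partial_{m-1}\}$. I must show $\str K'$ is an existentially closed model of $\mDF_0$, and hence a model of $\mDCF_0$. So let $\str L\includes\str K'$ be an arbitrary $m$-differential extension, and suppose an existential $\sig_m$-formula $\exists\vec x\,\phi(\vec x,\vec a)$, with $\vec a$ from $K$, holds in $\str L$. The decisive step is a differential-algebra extension: $\partial_m$, given on $K$, extends to a derivation on $L$ commuting with each of $\partial_0,\dots,\partial_{m-1}$ on $L$. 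In characteristic zero this is standard, since all field extensions are separable, so derivations extend uniquely along algebraic extensions and with free choice of values along a transcendence basis; the commutativity with the existing $\partial_i$ amounts to a satisfiable linear condition on those values. Once arranged, $(L,\partial_0,\dots,\partial_m)$ is a model of $\mpDF_0$ extending $\str K$, so by existential closedness of $\str K$ for $\mpDF_0$ the formula $\exists\vec x\,\phi$ has a solution in $K$. As $\phi$ does not mention $\partial_m$, the same solution serves in $\str K'$.

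With $\mDCF_0\included\mpDCF_0$ in hand for every $m$, Theorem~\ref{thm:1} applied to the chain $(\mDF_0\colon m\in\upomega)$ immediately yields that $\oDF_0$ has model-companion $\oDCF_0$.

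For quantifier elimination, completeness, and proper stability I use locality. Every formula in the signature of $\oDCF_0$ mentions only finitely many $\partial_i$ and so lies in some $\sig_m$; and every model of $\oDCF_0$ has $\sig_m$-reduct a model of $\mDCF_0$ for each $m$. Since each $\mDCF_0$ admits full elimination of quantifiers~\cite{MR2001h:03066}, each $\oDCF_0$-formula is equivalent to a quantifier-free one within some $\mDCF_0$, an equivalence that persists in $\oDCF_0$; completeness transfers likewise, as each sentence is decided in some $\mDCF_0$. For stability, if a formula had the order property in $\oDCF_0$, witnesses in any model would witness the same order property in the $\sig_m$-reduct, contradicting stability of $\mDCF_0$. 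Finally, to see $\oDCF_0$ is not superstable, observe that the $\emptyset$-definable subgroups $K_n=\{x:\partial_0x=0\land\dots\land\partial_{n-1}x=0\}$ of the additive group form a strictly descending chain with each step of infinite index, an obstruction to superstability. The main obstacle of the plan is the differential-algebra extension of $\partial_m$; everything else is a formal consequence of Theorem~\ref{thm:1} and of locality.
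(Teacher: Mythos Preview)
Your overall architecture---show that reducts of existentially closed models of $\mpDF_0$ are existentially closed for $\mDF_0$, then invoke Theorem~\ref{thm:1}, then argue locality---matches the paper's. But the ``decisive step'' as you state it is false, and the paper's very next theorem is a counterexample to it.

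You claim that if $(K,\partial_0,\dots,\partial_m)\models\mpDF_0$ and $(L,\partial_0,\dots,\partial_{m-1})$ is an $m$-differential extension of the reduct, then $\partial_m$ extends from $K$ to a derivation on $L$ commuting with the $\partial_i$ already given on $L$. This is not ``standard'' and is not true: the commutativity requirement $\partial_i(\partial_m t)=\partial_m(\partial_i t)$ is not merely a linear condition on the value $\partial_m t$, because the right-hand side involves $\partial_m$ applied to elements $\partial_i t$ of $L$ that may lie outside $K$ and on which $\partial_m$ is not yet defined. The paper exhibits an explicit pair $K\subseteq L$ (with $K=\Q(a^{\sigma}:\sigma\in\upomega^{m+1})$ and $L=K(b^{\tau}:\tau\in\upomega^{m-1})$, where $\partial_{m-1}b^{\tau}=a^{(\tau,0,0)}$) such that no extension of $\partial_m$ to $L$ can commute with $\partial_{m-1}$.

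What the paper actually proves is the weaker amalgamation statement (Condition~\ref{ext} of Theorem~\ref{thm:cond}): there is a \emph{larger} model $\str C$ of $\mpDF_0$ containing $(K,\partial_0,\dots,\partial_m)$ and whose $\sig_m$-reduct contains $(L,\partial_0,\dots,\partial_{m-1})$. This suffices for your argument, since a witness in $L$ lies in $\str C$ and existential closedness of $\str K$ in $\mpDF_0$ pulls it down to $K$. But establishing this amalgamation is the real work: the paper builds $\str C$ by a transfinite recursion, adjoining one element $a\in L\setminus K$ at a time, freely generating a differential field $K\langle a\rangle$ over $K$ in $m{+}1$ derivations along a carefully chosen well-ordering of $\upomega^{m+1}$, and then verifying that $K\langle a\rangle\cap L$ is exactly the $m$-differential subfield of $L$ generated by $a$ over $K$, so the construction is compatible with the given $\partial_i$ on $L$. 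Your proposal skips this construction entirely by asserting a false shortcut. The remaining parts (Theorem~\ref{thm:1}, locality, the failure of superstability) are fine and essentially match the paper.
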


\begin{proof}
Suppose $(L,\partial_0,\dots,\partial_{m-1})$ is a model of $\mDF_0$,
and $L$ has a subfield $K$ that is closed under the $\partial_i$
(where $i<m$), and there is also a derivation $\partial_m$ on $K$ such
that $(K,\partial_0\restriction K,\dots,\partial_{m-1}\restriction
K,\partial_m)$ is a model of $\mpDF_0$.  We shall include
$(L,\partial_0,\dots,\partial_{m-1})$ in another model of $\mDF_0$,
namely a model that expands to a model of $\mpDF_0$ that includes
$(K,\partial_0,\dots,\partial_m)$.  Thus condition \ref{ext} of
Theorem \ref{thm:cond} will hold, and therefore condition \ref{red}
will hold: this means $\mDCF_0\included\mpDCF_0$.  Since $m$ is
arbitrary, it will follow by Theorem~\ref{thm:1} that $\oDCF_0$ is the
model-companion of $\oDF_0$.  
  
If $K=L$, we are done.  Suppose $a\in L\setminus K$.  We shall define
a differential field $(K\langle
a\rangle,\tilde{\partial}_0,\dots,\tilde{\partial}_m)$,   
where $a\in K\langle a\rangle$, and for each $i$ in $m$,
\begin{equation}\label{eqn:rest}
\tilde{\partial}_i\restriction K\langle a\rangle\cap L=\partial_i\restriction K\langle a\rangle\cap L,
\end{equation}
and $\tilde{\partial}_m\restriction K=\partial_m$.
Then we shall be able to repeat the process, in case $L\nincluded
K\langle a\rangle$: we can work with an element of $L\setminus
K\langle a\rangle$ as we did with $a$.  Ultimately we shall obtain the
desired model of $\mpDF_0$ with reduct that includes
$(L,\partial_0,\dots,\partial_{m-1})$. 

Considering $\upomega^{m+1}$ as the set of $(m+1)$-tuples of natural
numbers, we shall have 
\begin{equation*}
K\langle a\rangle=K(a^{\sigma}\colon\sigma\in\upomega^{m+1}),
\end{equation*}
where
\begin{equation}\label{eqn:sigma}
a^{\sigma}=\tilde{\partial}_0{}^{\sigma(0)}\dotsm\tilde{\partial}_m{}^{\sigma(m)}a.
\end{equation}
In particular then, by \eqref{eqn:rest}, we must have
\begin{equation*}
\sigma(m)=0\implies a^{\sigma}=\partial_0{}^{\sigma(0)}\dotsm\partial_{m-1}{}^{\sigma(m-1)}a.
\end{equation*}
Using this rule, we make the definition
\begin{equation*}
K_1=K(a^{\sigma}\colon\sigma(m)=0).
\end{equation*}
We may assume that the derivations $\tilde{\partial}_i$ have been defined so far that
\begin{align}\label{eqn:m0K}
i<m&\implies\tilde{\partial}_i\restriction K_1=\partial_i\restriction K_1,&
\tilde{\partial}_m\restriction K&=\partial_m\restriction K.
\end{align}
Then \eqref{eqn:sigma} holds when $\sigma(m)<1$.

Now suppose that, for some positive $j$ in $\upomega$, we have been
able to define the field $K(a^{\sigma}\colon\sigma(m)<j)$, and for
each $i$ in $m$, we have been able to define $\tilde{\partial}_i$ as a
derivation on this field, and we have been able to define
$\tilde{\partial}_m$ as a derivation from
$K(a^{\sigma}\colon\sigma(m)<j-1)$ to
$K(a^{\sigma}\colon\sigma(m)<j)$, so that \eqref{eqn:m0K} holds, and
\eqref{eqn:sigma} holds when $\sigma(m)<j$.  We want to define the
$a^{\sigma}$ such that $\sigma(m)=j$, and we want to be able to extend
the derivations $\tilde{\partial}_i$ appropriately. 

If $i<m+1$, then, as in
\cite[\S4.1]{2007arXiv0708.2769P}, we let $\bm i$ 
denote the characteristic function of $\{i\}$ on $m+1$: that is, $\bm
i$ will be
the element of $\upomega^{m+1}$ that takes the value $1$ at $i$ and
$0$ elsewhere.
Considered as a product structure, $\upomega^{m+1}$
inherits from $\upomega$ the binary operations $-$ and $+$.  For each $i$ in $m+1$, we have a derivation $\tilde{\partial}_i$ from $K(a^{\sigma}\colon(\sigma+\bm i)(m)<j)$ to $K(a^{\sigma}\colon\sigma(m)<j)$ such that \eqref{eqn:m0K} holds, and also, if $\sigma(m)<j$, then
\begin{equation}\label{eqn:cond}
\sigma(i)>0\implies\tilde{\partial}_ia^{\sigma-\bm i}=a^{\sigma}.
\end{equation}
We now define the $a^{\sigma}$, where $\sigma(m)=j$, so that, first of all, we can extend $\tilde{\partial}_m$ so that \eqref{eqn:cond} holds when $\sigma(m)=j$ and $i=m$; but we must also ensure that \eqref{eqn:cond} can hold also when $\sigma(m)=j$ and $i<m$.  To do this, we shall have to make an inductive hypothesis, which is vacuously satisfied when $j=1$.  We shall also proceed recursively again.  More precisely, we shall refine the recursion that we are already engaged in.

We well-order the elements $\sigma$ of $\upomega^{m+1}$ by the linear ordering $\lord$ determined by
the left-lexicographic ordering of the $(m+1)$-tuples
\begin{equation*}
\bigl(\sigma(m),\sigma(0)+\dots+\sigma(m-1),
\sigma(0),\sigma(1),\dots,\sigma(m-2)\bigr).
\end{equation*}
Then $(\upomega^{m+1},\lord)$ has the order-type of the ordinal
$\upomega^2$.  This is a difference from the linear ordering defined
in \cite[\S4.1]{2007arXiv0708.2769P} and elsewhere.  However, for all $\sigma$ and $\tau$ in $\upomega^{m+1}$, and all $i$ in $m+1$, we still have
\begin{equation*}
  \sigma\lord\tau\implies\sigma+\bm i\lord\tau+\bm i.
\end{equation*}
We have assumed that, when $\tau=(0,\dots,0,j)$, we have the field $K(a^{\sigma}\colon\sigma\lord\tau)$, together with, for each $i$ in $m+1$, a derivation $\tilde{\partial}_i$ from $K(a^{\xi}\colon\xi+\bm
i\lord \tau)$ to $K(a^{\xi}\colon\xi\lord \tau)$ such that \eqref{eqn:m0K} holds, and also, if $\sigma\lord\tau$, then
\eqref{eqn:cond} holds.  We have noted that we \emph{can} have all of this when $\tau=(0,\dots,0,1)$.  Suppose we have all of this for \emph{some} $\tau$ in $\upomega^{m+1}$ such that $(0,\dots,0,1)\lordeq\tau$, that is, $\tau(m)>0$.  We want to define the extension $K(a^{\sigma}\colon\sigma\lordeq\tau)$ of $K(a^{\sigma}\colon\sigma\lord\tau)$ so that we can extend the $\tilde{\partial}_i$ appropriately.  For defining $a^{\tau}$, there are two
cases to consider.  We use the rules for derivations gathered, for example, in \cite[Fact 1.1]{MR2114160}.
\begin{compactenum}[1.]
\item
If $a^{\tau-\bm m}$ is algebraic over
$K(a^{\xi}\colon\xi\lord \tau-\bm m)$, then the derivative $\tilde{\partial}_ma^{\tau-\bm
  m}$ is determined as an element of $K(a^{\xi}\colon\xi\lord \tau)$;
we let $a^{\tau}$ be this element.
  \item
If $a^{\tau-\bm m}$ is not algebraic over
$K(a^{\xi}\colon\xi\lord \tau-\bm m)$, then we let $a^{\tau}$ be
transcendental over $L(a^{\xi}\colon\xi\lord \tau)$.  We are then free
to define $\tilde{\partial}_ma^{\tau-\bm m}$ as $a^{\tau}$.  (We require $a^{\tau}$ to be transcendental over $L(a^{\xi}\colon\xi\lord \tau)$, and not just over $K(a^{\xi}\colon\xi\lord \tau)$, so that we can establish \eqref{eqn:rest} later.)
\end{compactenum}
We now check that, when $i<m$ and $\tau(i)>0$, we can define
$\tilde{\partial}_ia^{\tau-\bm i}$ as $a^{\tau}$.  Here we make the inductive hypothesis mentioned above, namely that the foregoing two-part definition of $a^{\tau}$ was already used to define $a^{\tau-\bm i}$.  Again we consider
two cases.
\begin{compactenum}[1.]
  \item
Suppose $a^{\tau-\bm i}$ is algebraic over $K(a^{\xi}\colon\xi\lord \tau-\bm
i)$.  Then $\tilde{\partial}_ia^{\tau-\bm i}$ is determined as an
element of $K(a^{\xi}\colon\xi\lord \tau)$.  Thus the value of the bracket
$[\tilde{\partial}_i,\tilde{\partial}_m]$ at $a^{\tau-\bm i-\bm m}$ is
determined: indeed, we have
\begin{equation*}
[\tilde{\partial}_i,\tilde{\partial}_m]a^{\tau-\bm i-\bm m}
=\tilde{\partial}_i\tilde{\partial}_ma^{\tau-\bm i-\bm m} -\tilde{\partial}_m\tilde{\partial}_ia^{\tau-\bm i-\bm m}
=\tilde{\partial}_ia^{\tau-\bm i}-a^{\tau}.
\end{equation*}
By inductive hypothesis,
since $a^{\tau-\bm i}$ is algebraic over $K(a^{\xi}\colon\xi\lord \tau-\bm
i)$, also $a^{\tau-\bm i-\bm m}$ must be algebraic over
$K(a^{\xi}\colon\xi\lord \tau-\bm i-\bm m)$.  Since the bracket is $0$ on
this field, it must be $0$ at $a^{\tau-\bm i-\bm m}$ as well
\cite[Lem.~4.2]{2007arXiv0708.2769P}. 
\item
If $a^{\tau-\bm i}$ is transcendental over
$K(a^{\xi}\colon\xi\lord \tau-\bm i)$, then, since we are given $\tilde{\partial}_i$ as
a derivation whose domain is this field, we are free to define
$\tilde{\partial}_ia^{\tau-\bm i}$ as $a^{\tau}$. 
\end{compactenum}
Thus we have obtained $K(a^{\xi}\colon\xi\lordeq\tau)$ as desired.  By induction, we obtain the differential field $(K(a^{\sigma}\colon\sigma\in\upomega^{m+1}),\tilde{\partial}_0,\dots,\tilde{\partial}_m)$ such that \eqref{eqn:sigma} and \eqref{eqn:m0K} hold.

It remains to check that \eqref{eqn:rest} holds.  It is enough to show
\begin{equation}\label{eqn:cap}
K\langle a\rangle\cap L\included K_1.
\end{equation}
(We have the reverse inclusion.)
Suppose $\tau\in\upomega^{m+1}$ and $\tau(m)>0$.  By the definition of $a^{\tau}$,
\begin{gather}\label{eqn:in}
	a^{\tau}\in K(a^{\sigma}\colon\sigma\lord\tau)\alg\implies a^{\tau}\in K(a^{\sigma}\colon\sigma\lord\tau),\\\label{eqn:notin}
	a^{\tau}\notin K(a^{\sigma}\colon\sigma\lord\tau)\alg\implies a^{\tau}\notin L(a^{\sigma}\colon\sigma\lord\tau)\alg.
\end{gather}
Suppose $b\in K\langle a\rangle\cap L$.  Since $b\in K\langle
a\rangle$, we have, for some $\tau$ in $\upomega^{m+1}$, that $b$ is a
rational function over $K_1$ of those
$a^{\sigma}$ such that $\bm m\lordeq\sigma\lordeq\tau$.  But then, by
\eqref{eqn:in}, we do not need any $a^{\sigma}$ that is algebraic over
$K(a^{\xi}\colon\xi\lord\sigma)$, since it actually belongs to this
field.  When we throw out all such $a^{\sigma}$, then, by
\eqref{eqn:notin}, those that remain are algebraically independent
over $L$.  Thus we have 
\begin{equation*}
b\in K_1(a^{\sigma_0},\dots,a^{\sigma_{n-1}})\cap L
\end{equation*}
for some $\sigma_j$ in $\upomega^{m+1}$ such that
$(a^{\sigma_0},\dots,a^{\sigma_{n-1}})$ is algebraically independent
over $L$.  Therefore we may assume $n=0$, and $b\in
K_1$.  Thus \eqref{eqn:cap} holds, and we
have the differential field $(K\langle
a\rangle,\tilde{\partial}_0,\dots,\tilde{\partial}_m)$ fully as
desired. 

We have to be able to repeat this contruction, in case $L\nincluded
K\langle a\rangle$.  If $b\in L\setminus K\langle a\rangle$, we have
to be able to construct $K\langle a,b\rangle$, and so on.  Let
$L\langle a\rangle$ be the compositum of $K\langle a\rangle$ and $L$.
Since $\mDF_0$ has the Amalgamation Property, we can extend the
$\tilde{\partial}_i$, where $i<m$, to commutating derivations on the field
$L\langle a\rangle$ that extend the original $\partial_i$ on $L$.
Thus we have a model $(L\langle
a\rangle,\tilde{\partial}_0,\dots,\tilde{\partial}_{m-1})$ of $\mDF_0$
and a model $(K\langle a\rangle,\tilde{\partial}_0\restriction
K\langle a\rangle,\dots,\tilde{\partial}_{m-1}\restriction K\langle
a\rangle,\tilde{\partial}_m)$ of $\mpDF_0$ that include, respectively,
the models that we started with.  Now we can continue as before,
ultimately extending the domain of $\tilde{\partial}_m$ to include all
of $L$.  At
limit stages of this process, we take unions, which is no problem,
since $\mDF_0$ and $\mpDF_0$ are $\forall\exists$. 

Therefore $\oDF_0$ has the model-companion $\oDCF_0$.  Since the
$\mDCF_0$ have the properties of quantifier-elimination, completeness,
and stability \cite{MR2001h:03066}, the observations of Medvedev noted
earlier allow us to conclude that $\oDCF_0$ also has these properties.
Although each $\mDCF_0$ is actually $\upomega$-stable, $\oDCF_0$ is
not even superstable, since if $A$ is a set of constants (in the sense
that all of their derivatives are $0$), then as $\sigma$ ranges over
$A^{\upomega}$, the sets $\{\partial_mx=\sigma(m)\colon
m\in\upomega\}$ belong to distinct complete types. 
\end{proof}

In the foregoing proof, we cannot use Condition $\ref{ext}$ of
Theorem~\ref{thm:cond} in the stronger form in which the structure
$\str C$ is required to be a mere \emph{expansion} to $\sig_1$ of
$\str B$: 

\begin{theorem}
If $m>0$, there is a model $\str K$ of $\mpDF_0$ with a reduct that is
included in a model $\str L$ of $\mDF_0$, while $\str L$ does not
expand to a model of $\mpDF_0$ that includes $\str K$. 
\end{theorem}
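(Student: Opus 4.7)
The plan is to exhibit an explicit counterexample for every $m > 0$, making the non-extendability boil down to a linear first-order equation in one variable that admits no rational solution. Take $K = \mathbb{Q}(s)$ and declare $\partial_i = 0$ for $i < m$ while $\partial_m = d/ds$; this gives $\str K \models \mpDF_0$ trivially, since the $m$ zero derivations commute with everything. For $L$, take $L = \mathbb{Q}(s, u)$ with $u$ a fresh transcendental, set $\partial_i^L = 0$ for $i < m-1$, and let $\partial_{m-1}^L$ be the derivation determined by $\partial_{m-1}^L s = 0$ and $\partial_{m-1}^L u = su$. Since $\partial_{m-1}^L$ is the only nonzero derivation on $L$, all brackets vanish and $\str L \models \mDF_0$; since $\partial_{m-1}^L s = 0$, the reduct $\str K \restriction \sig_m = (\mathbb{Q}(s), 0, \ldots, 0)$ is a substructure of $\str L$ as required.

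Next I would assume toward contradiction that there is an expansion of $\str L$ by a derivation $\partial_m^+$ on $L$ commuting with each $\partial_i^L$ and satisfying $\partial_m^+ \restriction K = \partial_m$.  Setting $v = \partial_m^+ u \in L$, the commutator relation at $u$ gives
\begin{equation*}
\partial_{m-1}^L v \;=\; \partial_m^+(\partial_{m-1}^L u) \;=\; \partial_m^+(su) \;=\; sv + u,
\end{equation*}
using $\partial_m^+ s = 1$.  Since $\partial_{m-1}^L$ acts on $L$ as $su \cdot \partial/\partial u$, this rearranges to $(D - 1)v = u/s$, where $D = u\,\partial/\partial u$ is the Euler operator.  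Commutativity with the other (zero) derivations is automatic, so this single equation is the only obstruction.

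The real obstacle, and the only nontrivial step, is showing that $(D - 1)v = u/s$ has no solution in $\mathbb{Q}(s, u)$.  To handle this I would embed $\mathbb{Q}(s, u)$ into the formal Laurent series field $\mathbb{Q}(s)((u))$ by expansion at $u = 0$, which respects $\partial/\partial u$ and hence $D$, and write the image of $v$ as $\sum_n c_n(s)\, u^n$ with $c_n \in \mathbb{Q}(s)$.  Since $D u^n = n u^n$, the equation becomes $\sum_n (n-1)\,c_n(s)\, u^n = u/s$, whose coefficient at $u^1$ yields $0 \cdot c_1(s) = 1/s$, which is absurd.  This rules out any such $\partial_m^+$ and completes the proof.
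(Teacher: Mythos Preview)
Your proof is correct, and it is genuinely different from---and considerably more economical than---the paper's argument. The paper builds $K$ as the pure transcendental extension $\mathbb{Q}(a^{\sigma}:\sigma\in\upomega^{m+1})$ with the ``free'' differential structure $\partial_i a^{\sigma}=a^{\sigma+\bm i}$, and then adjoins a further family $(b^{\tau}:\tau\in\upomega^{m-1})$ with $\partial_{m-1}b^{\tau}=a^{(\tau,0,0)}$ to form $L$; the obstruction to extending $\partial_m$ is then extracted from the algebraic independence of all these generators, by comparing $\partial_m\partial_{m-1}b^{\tau}$ with $\partial_{m-1}\partial_m b^{\tau}$ and observing that the resulting identity would impose a nontrivial polynomial relation. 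Your example, by contrast, lives entirely inside $\mathbb{Q}(s,u)$, uses only one nonzero derivation on each of $K$ and $L$, and reduces the non-extendability to the unsolvability of the single linear equation $(D-1)v=u/s$ in rational functions, which you dispatch cleanly by reading off the $u^1$-coefficient in $\mathbb{Q}(s)((u))$. The paper's construction has the virtue of generalizing a known counterexample to separation of variables (Johnson--Reinhart--Rubel), and its ``generic'' flavour makes the obstruction structurally transparent; your construction has the virtue of minimality---the fields have transcendence degree $1$ and $2$ over $\mathbb{Q}$---and the obstruction is an explicit residue computation rather than a bookkeeping argument with infinitely many indeterminates.
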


\begin{proof}
We generalize the example of \cite{MR96g:35006} repeated in
\cite[Ex.~1.2, p.~927]{MR2000487}.
Suppose $K$ is a pure transcendental extension
$\Q(a^{\sigma}\colon\sigma\in\upomega^{m+1})$ of $\Q$.  We make this into a model of $\mpDF_0$ by requiring $\partial_ia^{\sigma}=a^{\sigma+\bm i}$ in each case.  Let $L$ be the pure transcendental extension $K(b^{\tau}\colon\tau\in\upomega^{m-1})$ of
$K$.  We make this into a model of $\mDF_0$ by extending the $\partial_i$ so that, if $i<m-1$, we have $\partial_ib^{\tau}=b^{\tau+\bm i}$, while
$\partial_{m-1}b^{\tau}$ is the element $a^{(\tau,0,0)}$ of $K$.  Note that indeed if $i<m-1$, then
\begin{equation*}
[\partial_i,\partial_{m-1}]b^{\tau}=\partial_ia^{(\tau,0,0)}-\partial_{m-1}b^{\tau+\bm i}=0.
\end{equation*}
Suppose, if possible, $\partial_m$ extends to $L$ as well so as to commute with the other $\partial_i$.  Then for any $\tau$ in $\upomega^{m-1}$ we have $\partial_mb^{\tau}=f(b^{\xi}\colon\xi\in\upomega^{m-1})$ for some polynomial $f$ over $K$.  But then, writing $\partial_{\eta}f$ for the derivative of $f$ with respect to the variable indexed by $\eta$, we have, as by \cite[Fact 1.1(0)]{MR2114160},
\begin{align*}
a^{(\tau,0,1)}
&=\partial_m\partial_{m-1}b^{\tau}\\
&=\partial_{m-1}\partial_mb^{\tau}\\
&=\partial_{m-1}(f(b^{\xi}\colon\xi\in\upomega^{m-1}))\\
&=\sum_{\eta\in\upomega^{m-1}}\partial_{\eta}f(b^{\xi}\colon\xi\in\upomega^{m-1})\cdot a^{(\eta,0,0)}+f^{\partial_{m-1}}(b^{\xi}\colon\xi\in\upomega^{m-1}),
\end{align*}
where the sum has only finitely many nonzero terms.  The polynomial expression $f^{\partial_{m-1}}(b^{\xi}\colon\xi\in\upomega^{m-1})$ cannot have $a^{(\tau,0,1)}$ as a constant term, since this is not $\partial_{m-1}x$ for any $x$ in $K$.  Thus we have obtained an algebraic relation among the $b^{\sigma}$ and $a^{\tau}$; but there can be no such relation.
\begin{comment}

  We must have  
\begin{equation*}
\partial_{m-1}\partial_m{}^kb^{\tau}
=\partial_m{}^k\partial_{m-1}b^{\tau}
=\partial_m{}^ka^{(\tau,0,0)}
=a^{(\tau,0,k)},
\end{equation*}
and these are all algebraically independent over $\Q(a^{\sigma}\colon\sigma\in\upomega^{m+1}\And\sigma(m-1)>0)$.  However, $\partial_{m-1}x$ is algebraic over this field whenever $x$ is algebraic over $K$.  Thus all of the $\partial_m{}^kb^{\tau}$ are algebraically independent over $K$; in particular, when $k>0$, they do not belong to $L$.
\end{comment}
\end{proof}

Finally, the union of a chain of non-companionable theories may be companionable:

\begin{theorem}
In the signature $\{f\}\cup\{c_k\colon k\in\upomega\}$, where $f$ is a singulary operation-symbol and the $c_k$ are constant-symbols, let $T_0$ be axiomatized by the sentences
\begin{equation*}
\Forall x\Forall y(fx=fy\lto x=y)
\end{equation*}
and, for each $k$ in $\upomega$,
%\begin{gather*}
\begin{align*}
\Forall x(f^{k+1}x&\neq x),&
\Forall x(fx=c_k&\lto x=c_{k+1}),&
fc_{k+2}=c_{k+1}&\lto fc_{k+1}=c_k.
\end{align*}
%\end{gather*}
For each $n$ in $\upomega$, let $T_{n+1}$ be axiomatized by
\begin{equation*}
  T_n\cup\{fc_{n+1}=c_n\}. %\land fc_n=c_{n-1}\land\dots\land fc_1=c_0\}.
\end{equation*}
Then
\begin{compactenum}
\item
each $T_n$ is universally axiomatized, and \emph{a fortiori} $\forall\exists$, so it does have existentially closed models;
\item
each $T_n$ has the Amalgamation Property;
 \item
 every existentially closed model of $T_{n+1}$ is an existentially closed model of~$T_n$;
\item
 no $T_n$ is companionable;
 \item
 $\bigcup_{n\in\upomega}T_n$ is companionable.
 \end{compactenum}
\end{theorem}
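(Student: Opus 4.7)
The plan is to verify items (1)--(3) directly from the form of the axioms, then focus on (4) via an ultraproduct argument and on (5) by identifying the model-companion of $\bigcup_n T_n$ explicitly.

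Item (1) is immediate: every axiom displayed in the statement is either of the form $\forall\bar x\,\psi(\bar x)$ with $\psi$ quantifier-free, or is a ground sentence (and so vacuously universal); so each $T_n$ is universal, \emph{a fortiori} $\forall\exists$, and every model extends to an existentially closed one in the standard way. Item (3) is equally quick: the only axiom of $T_{n+1}$ not already in $T_n$ is the ground sentence $fc_{n+1}=c_n$; if $\str A\models T_{n+1}$ and $\str A\subseteq\str B\models T_n$, then the constants being preserved gives $\str B\models fc_{n+1}=c_n$, hence $\str B\models T_{n+1}$, and so existential closedness of $\str A$ as a model of $T_{n+1}$ transfers to existential closedness as a model of $T_n$. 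For item (2), every model of $T_n$ decomposes as a disjoint union of chains under $f$, each a $\mathbb Z$-chain or an $\omega$-chain (since $f$ is injective and has no cycles), with the backward tail $c_n\to c_{n-1}\to\dots\to c_0$ at the top of one such chain. Given $\str M_0\subseteq\str M_1,\str M_2$, I would amalgamate by taking the disjoint union of $\str M_1$ and $\str M_2$ over $\str M_0$ and identifying, chain by chain, the unique backward extensions of each $\omega$-chain of $\str M_0$ that both $\str M_1$ and $\str M_2$ happen to supply (injectivity of $f$ forces the identifications); verifying the axioms in the resulting amalgam is then routine.

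The main content is item (4). By the criterion that a $\forall\exists$ theory has a model-companion iff its class of existentially closed models is elementary, it suffices to produce a non-principal ultraproduct of existentially closed models of $T_n$ that is itself not existentially closed. For each $N\geq n$, let $\str M_N$ be an existentially closed model of $T_n$ in which $fc_{k+1}=c_k$ holds exactly when $k<N$ (so that $c_N$ has no preimage in $\str M_N$) and in which all constants $c_j$ are distinct. I would produce $\str M_N$ by starting from the obvious minimal structure realizing these two conditions and closing off to an existentially closed model by the standard $\forall\exists$ construction; that construction cannot alter the value $k_0=N$ at which the cascade fails (modifying $f$ on an existing element would be required) and cannot merge constants. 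In the non-principal ultraproduct $\str U=\prod_N \str M_N/\mathcal U$, every standard equation $fc_{k+1}=c_k$ holds by cofiniteness of $\{N:N>k\}$, so $\str U\models\bigcup_m T_m$. However the diagonal element $a=[N\mapsto c_N^{\str M_N}]$ has no preimage in $\str U$ (no factor has one), and by distinctness of constants in each factor together with non-principality of $\mathcal U$ it coincides with no $c_k^{\str U}$ for any standard $k$. One may therefore extend $\str U$ to a model of $T_n$ by freely adjoining a new preimage $y$ of $a$: injectivity is preserved since $a$ had no preimage, no cycle is created, and the axiom $fy=c_k\to y=c_{k+1}$ applies vacuously since $a$ is no $c_k$. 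Thus $\exists y\,(fy=a)$ is solvable in a $T_n$-extension of $\str U$ but not in $\str U$ itself, which shows that $\str U$ is not existentially closed.

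For (5), I claim the model-companion of $\bigcup_n T_n$ is $T^{*}=\bigcup_n T_n\cup\{\forall x\,\exists y\,(fy=x)\}$, whose models are precisely the disjoint unions of $\mathbb Z$-chains under $f$ in which the constants $c_k$ occupy one distinguished chain with $fc_{k+1}=c_k$ for every $k$. Every model of $\bigcup_n T_n$ embeds into such a structure by freely adjoining any missing preimages (and new chains where needed), so the two theories have the same universal consequences. Model-completeness of $T^{*}$ I would verify by Robinson's test: given $\str M\subseteq\str N$ both models of $T^{*}$, any quantifier-free $\phi(\bar a,y)$ with $\bar a\in M$ that is solvable in $\str N$ is already solvable in $\str M$, since if $\phi$ forces an equality of the form $f^k y=f^l a_i$ or $f^k y=c_j$ then $y$ is determined and lies in a chain of $\str M$, while otherwise $\phi$ is a conjunction of inequalities and is satisfied by any element of $\str M$ avoiding the finitely many forbidden values. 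The step I expect to require the most care is the construction of the $\str M_N$ in item (4), where the bookkeeping must confirm that the $\forall\exists$ closure preserves both the prescribed value of $k_0$ and the distinctness of every $c_j$; once those are in hand, the ultraproduct argument runs transparently.
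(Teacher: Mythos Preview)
Your proposal is correct, and the overall strategy matches the paper's, but the paper organizes the work differently and more economically. Rather than treating items~(2)--(5) separately, the paper first writes down explicit models $\str A_m$ (universe $\upomega\times\upomega$) and $\str A_{\upomega}$ (universe~$\Z$), and observes that every model of $T_k$ decomposes as exactly one copy of some $\str A_{\beta}$ with $k\leq\beta\leq\upomega$ together with disjoint copies of $\upomega$ and $\Z$; moreover the index~$\beta$ is determined by the model. From this structure theorem, (2) is immediate (the invariant $\beta$ is fixed by the base), and the existentially closed models are characterized as those omitting the type $\{\Forall y fy\neq x\}\cup\{x\neq c_j\colon j\in\upomega\}$, i.e.\ those with no stray $\upomega$-chains. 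Item~(4) then follows in one line: each $\str A_m$ is existentially closed, yet the type above is finitely satisfiable in it, so an elementary extension realizes the type and fails to be existentially closed.

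Your ultraproduct argument for~(4) is a valid alternative that reaches the same conclusion by a parallel route: the diagonal element $a=[N\mapsto c_N]$ in $\prod_N\str M_N/\mathcal U$ realizes exactly the omitted type. The paper's version avoids the bookkeeping you flag (preserving $k_0=N$ and distinctness of constants under the existential closure), since it works with a single explicit model and a compactness step instead. Your hands-on amalgamation for~(2) is fine, but the phrase ``injectivity of $f$ forces the identifications'' deserves the extra remark that on the chain carrying the constants the backward extension is rigid because any preimage of $c_k$ must be $c_{k+1}$, whose $f$-value is already fixed in the base; this is what the paper's observation that $\beta$ is an invariant encodes. For~(5) you and the paper identify the same model-companion, $\bigcup_n T_n\cup\{\Forall x\Exists y fy=x\}$; the paper simply states it, while your Robinson's-test sketch supplies the verification.
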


\begin{proof}
Let $\str A_m$ be the model of $T_0$ with universe $\upomega\times\upomega$ such that
\begin{align*}
f^{\str A_m}(k,\ell)&=(k,\ell+1),&
c_k{}^{\str A_m}&=
\begin{cases}
(k-m,0),&\text{ if }k>m,\\
(0,m-k),&\text{ if }k\leq m.	
\end{cases}
\end{align*}
Let $\str A_{\upomega}$ be the model of $T_0$ with universe $\Z$ such that
\begin{align*}
f^{\str A_{\upomega}}k&=k+1,&
c_k{}^{\str A_{\upomega}}&=-k.
\end{align*}
Then $\str A_m$ is a model of each $T_k$ such that $k\leq m$; and $\str A_{\upomega}$ is a model of each $T_k$.  Moreover, each model of $T_k$ consists of a copy of some $\str A_{\beta}$ such that $k\leq\beta\leq\upomega$, along with some (or no) disjoint copies of $\upomega$ and $\Z$ in which $f$ is interpreted as $x\mapsto x+1$.  Conversely, every structure of this form is a model of $T_k$.  The $\beta$ such that $\str A_{\beta}$ embeds in a given model of $T_k$ is uniquely determined by that model.  Consequently $T_k$ has the Amalgamation Property.
Also, a model of $T_k$ is an existentially closed model if and only if includes no copies of $\upomega$ (outside the embedded $\str A_{\beta}$):  This establishes that every existentially closed model of $T_{k+1}$ is an existentially closed model of $T_k$.

The existentially closed models of $T_k$ are those models that omit the type $\{\Forall yfy\neq x\}\cup\{x\neq c_j\colon j\in\upomega\}$.  In particular, $\str A_m$ is an existentially closed model of $T_k$, if $k\leq m$; but $\str A_m$ is elementarily equivalent to a structure that realizes the given type.  Thus $T_k$ is not companionable.

Finally, the model-companion of $\bigcup_{k\in\upomega}T_k$ is axiomatized by this theory, together with $\Forall x\Exists yfy=x$.
\end{proof}

%\bibliographystyle{amsplain}
%\bibliography{../Public/references}
%\bibliography{../references}

\providecommand{\bysame}{\leavevmode\hbox to3em{\hrulefill}\thinspace}
\providecommand{\MR}{\relax\ifhmode\unskip\space\fi MR }
% \MRhref is called by the amsart/book/proc definition of \MR.
\providecommand{\MRhref}[2]{%
  \href{http://www.ams.org/mathscinet-getitem?mr=#1}{#2}
}
\providecommand{\href}[2]{#2}

  \end{document}